\newtheorem{definition}{Definition}[section]
\newtheorem{theorem}[definition]{Theorem}
\newtheorem{remark}[definition]{Remark}
\newtheorem{final Remarks}[definition]{Final Remarks}
\newtheorem{lemma}[definition]{Lemma}
\newtheorem{example}[definition]{Example}
\numberwithin{equation}{section}
\begin{document}
\title{The classification of smooth structures on a homotopy complex projective space}
\vspace{2cm}
\author{Ramesh Kasilingam
\\{Theoretical Statistics and Mathematics Unit}\\{Indian Statistical Institute, Kolkata}\\{India.}\\e-mail : {rameshkasilingam.iitb@gmail.com}}
\date{}
\maketitle
\begin{abstract}
We classify, up to diffeomorphism, all closed smooth manifolds homeomorphic to the complex projective $n$-space $\mathbb{C}\textbf{P}^n$, where $n=3$ and $4$. Let $M^{2n}$ be a closed smooth $2n$-manifold homotopy equivalent to $\mathbb{C}\textbf{P}^n$. We show that, up to diffeomorphism, $M^{6}$ has a unique differentiable structure and $M^{8}$ has at most two distinct differentiable structures. We also show that, up to concordance, there exist at least two distinct differentiable structures on a finite sheeted cover $N^{2n}$ of $\mathbb{C}\textbf{P}^n$ for $n=4, 7$ or $8$ and six distinct differentiable structures on $N^{10}$.
\end{abstract}
{\bf{Keywords.}}
complex projective spaces; smooth structures; inertia groups and concordance.
\paragraph{Classification.}
57R55; 57R50.
\section{Introduction}
A piecewise linear homotopy complex projective space $M^{2n}$ is a
closed PL $2n$-manifold homotopy equivalent to the complex projective space $\mathbb{C}\textbf{P}^n$. In \cite{Sul67}, Sullivan gave a complete enumeration of the set of PL isomorphism classes of these manifolds as a consequence of his Characteristic Variety theorem and his analysis of the homotopy type of $G/PL$. He also proved that the group of concordance classes of smoothing of $\mathbb{C}\textbf{P}^n$ is in one-to-one correspondence with the set of
$c$-oriented diffeomorphism classes of smooth manifolds homeomorphic (or PL-homeomorphic) to $\mathbb{C}\textbf{P}^n$, where $c$ is the generator of $H^2(\mathbb{C}\textbf{P}^n;\mathbb{Z})$.\\
\indent In section 2, we classify up to diffeomorphism all closed smooth manifolds homeomorphic to $\mathbb{C}\textbf{P}^n$, where $n=3$ and $4$.\\
Let $M^{2n}$ be a closed smooth $2n$-manifold homotopy equivalent to $\mathbb{C}\textbf{P}^n$. The surgery theory tells us that there are infinitely many diffeomorphism types in the family of closed smooth manifolds homotopy equivalent to $\mathbb{C}\textbf{P}^n$ when $n\geq 3$. In the second section, we also show that if $N$ is a closed smooth manifold homeomorphic to $M^{2n}$, where $n=3$ or $4$, there is a homotopy sphere $\Sigma\in \Theta_{2n}$ such that $N$ is diffeomorphic to $M\#\Sigma$. In particular, up to diffeomorphism, $M^{6}$ has a unique differentiable structure and $M^{8}$ has at most two distinct differentiable structures.\\
In section 3, we prove that if $N^{2n}$ is a finite sheeted cover of $\mathbb{C}\textbf{P}^n$, then up to concordance, there exist at least $|\Theta_{2n}|$ distinct differentiable structures on $N^{2n}$, namely $\{ [N^{2n}\#\Sigma]~~|~~ \Sigma\in \Theta_{2n}\}$, where $n=4, 5, 7$ or $8$ and $|\Theta_{2n}|$ is the order of $\Theta_{2n}$. 
\section{Smooth Structures on Complex Projective Spaces}
We recall some terminology from \cite{KM63}:
\begin{definition}\rm{
\begin{itemize}
\item[(a)] A homotopy $m$-sphere $\Sigma^m$ is an oriented smooth closed manifold homotopy equivalent to the standard unit sphere $\mathbb{S}^m$ in $\mathbb{R}^{m+1}$.
\item[(b)]A homotopy $m$-sphere $\Sigma^m$ is said to be exotic if it is not diffeomorphic to $\mathbb{S}^m$.
\item[(c)] Two homotopy $m$-spheres $\Sigma^{m}_{1}$ and $\Sigma^{m}_{2}$ are said to be equivalent if there exists an orientation preserving diffeomorphism $f:\Sigma^{m}_{1}\to \Sigma^{m}_{2}$.
\end{itemize}
The set of equivalence classes of homotopy $m$-spheres is denoted by $\Theta_m$. The equivalence class of $\Sigma^m$ is denoted by [$\Sigma^m$]. When $m\geq 5$, $\Theta_m$ forms an abelian group with group operation given by connected sum $\#$ and the zero element represented by the equivalence class  of $\mathbb{S}^m$. M. Kervaire and J. Milnor \cite{KM63} showed that each  $\Theta_m$ is a finite group; in particular,  $\Theta_{8}$ and  $\Theta_{16}$ are cyclic groups of order $2$.}
\end{definition}
\begin{definition}\rm
Let $M$ be a topological manifold. Let $(N,f)$ be a pair consisting of a smooth manifold $N$ together with a homeomorphism $f:N\to M$. Two such pairs $(N_{1},f_{1})$ and $(N_{2},f_{2})$ are concordant provided there exists a diffeomorphism $g:N_{1}\to N_{2}$ such that the composition $f_{2}\circ g$ is topologically concordant to $f_{1}$, i.e., there exists a homeomorphism $F: N_{1}\times [0,1]\to M\times [0,1]$ such that $F_{|N_{1}\times 0}=f_{1}$ and $F_{|N_{1}\times 1}=f_{2}\circ g$. The set of all such concordance classes is denoted by $\mathcal{C}(M)$.\\
\indent Start by noting that there is a homeomorphism $h: M^n\#\Sigma^n \to M^n$ $(n\geq5)$ which is the inclusion map outside of homotopy sphere $\Sigma^n$ and well defined up to topological concordance. We will denote the class in $\mathcal{C}(M)$ of $(M^n\#\Sigma^n, h)$ by $[M^n\#\Sigma^n]$. (Note that $[M^n\#\mathbb{S}^n]$ is the class of $(M^n, Id)$.)
\end{definition}
\begin{theorem}\label{con.oct}
\begin{itemize}
 \item[(i)]$\mathcal{C}(\mathbb{C}\textbf{P}^3)=0$.
 \item[(ii)]$\mathcal{C}(\mathbb{C}\textbf{P}^4)=\{[\mathbb{C}\textbf{P}^4],[\mathbb{C}\textbf{P}^4\#\Sigma^{8}]\}\cong \mathbb{Z}_2$. 
 \end{itemize}
\end{theorem}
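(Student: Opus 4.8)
The plan is to compute the set of concordance classes $\mathcal{C}(\mathbb{C}\textbf{P}^n)$ via its identification, due to the smoothing theory of Kirby--Siebenmann, with the homotopy set $[\mathbb{C}\textbf{P}^n, \mathrm{TOP}/O]$; equivalently, since we are comparing smooth structures on a fixed topological manifold, the relevant bundle-theoretic model is $[\mathbb{C}\textbf{P}^n, \mathrm{PL}/O]$ together with the fact that $\mathrm{PL}/O$ is $6$-connected with $\pi_m(\mathrm{PL}/O)\cong \Theta_m$ for $m\geq 5$. First I would set up this identification carefully, noting that concordance classes of smoothings correspond to homotopy classes of lifts, and reduce the problem to an obstruction-theory computation over the cell structure of $\mathbb{C}\textbf{P}^n$, which has cells in dimensions $0,2,4,\dots,2n$.

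For part (i), the key point is a dimension count: $\mathbb{C}\textbf{P}^3$ is a $6$-manifold whose cells sit in dimensions $0,2,4,6$. Since $\mathrm{PL}/O$ (equivalently the fiber computing smoothings) has its first nonzero homotopy group in degree $7$ (because $\Theta_m=0$ for $m\leq 6$, using $\Theta_5=\Theta_6=0$ and the low-dimensional vanishing from \cite{KM63}), every obstruction and every difference class lives in $H^{k}(\mathbb{C}\textbf{P}^3;\pi_k)$ with $k\leq 6$, and these coefficient groups all vanish. Hence there is a unique concordance class and $\mathcal{C}(\mathbb{C}\textbf{P}^3)=0$. I would phrase this as: the only obstruction could live in $H^7$, but $\mathbb{C}\textbf{P}^3$ has dimension $6$, so it vanishes automatically.

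For part (ii), $\mathbb{C}\textbf{P}^4$ is an $8$-manifold with a top cell in dimension $8$. The relevant coefficient group in the top dimension is $\pi_8(\mathrm{PL}/O)\cong \Theta_8\cong \mathbb{Z}_2$, while all lower obstructions vanish as in part (i) since $\Theta_m=0$ for $5\leq m\leq 7$. Thus the computation collapses to the single group $H^8(\mathbb{C}\textbf{P}^4;\Theta_8)\cong \Theta_8\cong \mathbb{Z}_2$, giving $\mathcal{C}(\mathbb{C}\textbf{P}^4)\cong \mathbb{Z}_2$. It then remains to identify the two classes concretely: the zero class is $[\mathbb{C}\textbf{P}^4]=[(\mathbb{C}\textbf{P}^4,\mathrm{Id})]$, and the nonzero class must be realized by altering the smooth structure only on the top cell, which is exactly what connected sum with the generator $\Sigma^8\in\Theta_8$ accomplishes. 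I would argue that $[\mathbb{C}\textbf{P}^4\#\Sigma^8]$ is nonzero by showing the difference class it determines in $H^8(\mathbb{C}\textbf{P}^4;\Theta_8)$ is precisely the image of the generator $[\Sigma^8]$ under the natural map $\Theta_8\to \mathcal{C}(\mathbb{C}\textbf{P}^4)$ induced by collapsing to the top cell.

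The main obstacle I anticipate is the last identification in part (ii): establishing that $\mathbb{C}\textbf{P}^4\#\Sigma^8$ is genuinely non-concordant to $\mathbb{C}\textbf{P}^4$ rather than merely that the obstruction group is nonzero. This requires showing the connected-sum map $\Theta_8\to\mathcal{C}(\mathbb{C}\textbf{P}^4)$ is injective (equivalently that the inertia group of $\mathbb{C}\textbf{P}^4$ vanishes, so that $\#\Sigma^8$ does not yield back the standard structure), and that every concordance class arises this way. I would handle this by comparing with the surgery-exact-sequence description of smoothings and invoking the connectivity of $\mathrm{PL}/O$ to see that the map induced by collapsing $\mathbb{C}\textbf{P}^4$ onto its top cell $\mathbb{S}^8$ induces an isomorphism $\mathcal{C}(\mathbb{C}\textbf{P}^4)\xrightarrow{\cong}\mathcal{C}(\mathbb{S}^8)\cong\Theta_8$ on the relevant coefficient group, so that connected sum with $\Sigma^8$ exhausts and distinguishes the two classes.
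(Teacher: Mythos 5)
Your identification of $\mathcal{C}(\mathbb{C}\textbf{P}^n)$ with $[\mathbb{C}\textbf{P}^n,\mathrm{TOP}/O]$ followed by a cell-by-cell analysis is essentially the paper's strategy (the paper phrases it via the Puppe sequence of $\mathbb{C}\mathbb{P}^{n-1}\hookrightarrow\mathbb{C}\mathbb{P}^{n}$ rather than obstruction theory, but it is the same computation), and part (i) goes through: $\mathbb{C}\textbf{P}^3$ has cells only in dimensions $0,2,4,6$, all the coefficient groups there vanish, hence $[\mathbb{C}\textbf{P}^3,\mathrm{TOP}/O]=0$.

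The gap is in part (ii), exactly at the step you yourself flag as the main obstacle, and your proposed fix does not close it. Connectivity of $\mathrm{PL}/O$ (equivalently, vanishing of the lower obstruction groups) gives only that the collapse map $\mathbb{C}\textbf{P}^4\to\mathbb{S}^8$ induces a \emph{surjection} $\Theta_8\cong[\mathbb{S}^8,\mathrm{TOP}/O]\to[\mathbb{C}\textbf{P}^4,\mathrm{TOP}/O]$ (note also the direction: contravariance sends $\mathcal{C}(\mathbb{S}^8)$ to $\mathcal{C}(\mathbb{C}\textbf{P}^4)$, not the other way as you wrote). Injectivity is a genuinely separate issue: in the Puppe sequence the kernel of this surjection is the image of $(Sg)^{*}:[S\mathbb{C}\mathbb{P}^{3},\mathrm{TOP}/O]\to[\mathbb{S}^{8},\mathrm{TOP}/O]$, where $g:\mathbb{S}^7\to\mathbb{C}\mathbb{P}^3$ is the attaching map of the top cell, and $[S\mathbb{C}\mathbb{P}^{3},\mathrm{TOP}/O]$ is \emph{not} zero --- it receives contributions from $H^7(S\mathbb{C}\mathbb{P}^3;\Theta_7)\cong\mathbb{Z}/28$ and from $H^3(S\mathbb{C}\mathbb{P}^3;\pi_3(\mathrm{TOP}/O))\cong\mathbb{Z}/2$. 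So there is a potentially nonzero differential hitting $H^8(\mathbb{C}\textbf{P}^4;\Theta_8)$, and no dimension or connectivity count rules it out. One must actually prove that $(Sg)^{*}$ vanishes, equivalently that $\mathbb{C}\textbf{P}^4\#\Sigma^8$ is not concordant to $\mathbb{C}\textbf{P}^4$; the paper imports precisely this from \cite[Lemma 3.17]{FJ94}. Without some such input your argument yields only $|\mathcal{C}(\mathbb{C}\textbf{P}^4)|\le 2$ together with surjectivity of $\Sigma\mapsto[\mathbb{C}\textbf{P}^4\#\Sigma]$, i.e. that every concordance class is of the form $[\mathbb{C}\textbf{P}^4\#\Sigma]$, but not that the two classes are distinct.
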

\begin{proof}
(i): Consider the following Puppe's exact sequence for the inclusion $i:\mathbb{C}\mathbb{P}^{n-1}\hookrightarrow \mathbb{C}\mathbb{P}^{n}$ along $Top/O$:
\begin{equation}\label{longG}
....\longrightarrow [S\mathbb{C}\mathbb{P}^{n-1}, Top/O]\stackrel{(S(g))^{*}}{\longrightarrow}[\mathbb{S}^{2n}, Top/O]\stackrel{f^{*}_{\mathbb{C}\mathbb{P}^{n}}}{\longrightarrow}[\mathbb{C}\mathbb{P}^{n}, Top/O]\stackrel{i^{*}}{\longrightarrow}[\mathbb{C}\mathbb{P}^{n-1}, Top/O],
\end{equation}
where $S(g)$ is the suspension of the map $g:\mathbb{S}^{2n-1}\to \mathbb{C}\mathbb{P}^{n-1}$.
If $n=2$ or $3$ in the above exact sequence (\ref{longG}), we can prove that $[\mathbb{C}\mathbb{P}^{n}, Top/O]=0$. Now by using the identifications $\mathcal{C}(\mathbb{C}\textbf{P}^{3})=[\mathbb{C}\textbf{P}^{3}, Top/O]$ given by \cite[pp. 194-196]{KS77}, $\mathcal{C}(\mathbb{C}\textbf{P}^{3})=0$. This proves (i).\\
(ii): Now consider the case $n=4$ in the above exact sequence (\ref{longG}), we have that $f^{*}_{\mathbb{C}\textbf{P}^{4}}:[\mathbb{S}^8, Top/O]\cong \Theta_8\mapsto [\mathbb{C}\mathbb{P}^{4}, Top/O]$ is surjective. Then by using \cite[Lemma 3.17]{FJ94}, $f^{*}_{\mathbb{C}\textbf{P}^{4}}$ is an isomorphism. Hence $\mathcal{C}(\mathbb{C}\textbf{P}^4)=\{[\mathbb{C}\textbf{P}^4],[\mathbb{C}\textbf{P}^4\#\Sigma^{8}]\}\cong \mathbb{Z}_2$. This proves (ii).
\end{proof}
\begin{definition}\label{homo.iner}\rm{
Let $M^m$ be a closed smooth, oriented $m$-dimensional manifold. The inertia group $I(M)\subset \Theta_{m}$ is defined as the set of $\Sigma \in \Theta_{m}$ for which there exists an orientation preserving diffeomorphism $\phi :M\to M\#\Sigma$.\\
Define the concordance inertia group $I_c(M)$ to be the set of all $\Sigma\in I(M)$ such that $M\#\Sigma$ is concordant to $M$.}
\end{definition}
\begin{theorem}{\rm \cite[Theorem 4.2]{Kas14}}\label{first}
For $n \geq 1$, $I_c(\mathbb{C}\mathbb{P}^{n})=I(\mathbb{C}\mathbb{P}^{n}).$
\end{theorem}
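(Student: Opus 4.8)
The reverse inclusion $I_c(\mathbb{C}\mathbb{P}^{n})\subseteq I(\mathbb{C}\mathbb{P}^{n})$ is immediate from Definition \ref{homo.iner}, so the entire content is the inclusion $I(\mathbb{C}\mathbb{P}^{n})\subseteq I_c(\mathbb{C}\mathbb{P}^{n})$. The plan is to feed an orientation-preserving diffeomorphism into Sullivan's correspondence recalled in the introduction: since the group $\mathcal{C}(\mathbb{C}\mathbb{P}^{n})$ of concordance classes of smoothings is in one-to-one correspondence with the set of $c$-oriented diffeomorphism classes of smooth manifolds homeomorphic to $\mathbb{C}\mathbb{P}^{n}$, the injectivity half of this bijection says that two smoothings which are $c$-oriented diffeomorphic are already concordant. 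Hence it suffices to show that for $\Sigma\in I(\mathbb{C}\mathbb{P}^{n})$ the standard smoothing $(\mathbb{C}\mathbb{P}^{n},\mathrm{Id})$ and the smoothing $(\mathbb{C}\mathbb{P}^{n}\#\Sigma, h)$, where $h:\mathbb{C}\mathbb{P}^{n}\#\Sigma\to\mathbb{C}\mathbb{P}^{n}$ is the standard homeomorphism (the inclusion outside $\Sigma$), are $c$-oriented diffeomorphic; for then $[\mathbb{C}\mathbb{P}^{n}\#\Sigma]=[\mathbb{C}\mathbb{P}^{n}]$ in $\mathcal{C}(\mathbb{C}\mathbb{P}^{n})$, i.e. $\Sigma\in I_c(\mathbb{C}\mathbb{P}^{n})$. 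For the small values of $n$ where $\Theta_{2n}=0$ (e.g. $n=1,2$) the statement is vacuous, so no content is lost there.

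So I would fix $\Sigma\in I(\mathbb{C}\mathbb{P}^{n})$ together with an orientation-preserving diffeomorphism $\phi:\mathbb{C}\mathbb{P}^{n}\to\mathbb{C}\mathbb{P}^{n}\#\Sigma$. Because $2n>2$, the homeomorphism $h$ identifies $H^{2}(\mathbb{C}\mathbb{P}^{n}\#\Sigma;\mathbb{Z})$ with $H^{2}(\mathbb{C}\mathbb{P}^{n};\mathbb{Z})=\mathbb{Z}\langle c\rangle$ compatibly with the standard generator; write $c'$ for the corresponding generator on $\mathbb{C}\mathbb{P}^{n}\#\Sigma$. The first step is to record the only two possibilities $\phi^{*}(c')=\pm c$. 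If $\phi^{*}(c')=c$, then $\phi$ is already $c$-oriented, so the two smoothings are $c$-oriented diffeomorphic and we are done by the previous paragraph.

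The case $\phi^{*}(c')=-c$ is the point requiring care, and it is where complex conjugation enters. Evaluating on top cohomology gives $\phi^{*}((c')^{n})=(\phi^{*}c')^{n}=(-1)^{n}c^{n}$; since $\phi$ preserves orientation and the complex orientation of $\mathbb{C}\mathbb{P}^{n}$ is detected by $c^{n}$, this forces $(-1)^{n}=1$, i.e. $n$ is even. Let $\tau:\mathbb{C}\mathbb{P}^{n}\to\mathbb{C}\mathbb{P}^{n}$ be complex conjugation, so that $\tau^{*}c=-c$ and $\tau$ preserves orientation exactly when $n$ is even, which is precisely the case at hand. Then $\phi\circ\tau:\mathbb{C}\mathbb{P}^{n}\to\mathbb{C}\mathbb{P}^{n}\#\Sigma$ is again an orientation-preserving diffeomorphism, and now $(\phi\circ\tau)^{*}(c')=\tau^{*}(\phi^{*}c')=\tau^{*}(-c)=c$, so $\phi\circ\tau$ is $c$-oriented. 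In either case the two smoothings are $c$-oriented diffeomorphic, hence concordant by Sullivan's correspondence, and therefore $\Sigma\in I_c(\mathbb{C}\mathbb{P}^{n})$, which completes the argument.

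I expect the main obstacle to be exactly the sign ambiguity of $\phi^{*}$ on $H^{2}$: Sullivan's correspondence only controls \emph{$c$-oriented} diffeomorphisms, so a diffeomorphism inducing $-1$ on $H^{2}$ does not by itself produce a concordance, and one must manufacture the conjugation $\tau$ and verify, via the parity computation on $c^{n}$, that $\tau$ is orientation-preserving in exactly the situation that arises. Conceptually this is the statement that every self-homotopy-equivalence of $\mathbb{C}\mathbb{P}^{n}$ is homotopic either to the identity or to $\tau$, and that both are realized by diffeomorphisms, so the ambiguity between the inertia group and the concordance inertia group — which is genuinely governed by non-smoothable self-homeomorphisms — collapses for $\mathbb{C}\mathbb{P}^{n}$.
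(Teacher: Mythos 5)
The paper does not actually prove this statement---it is imported verbatim from \cite[Theorem 4.2]{Kas14}---so there is no internal proof to compare against; your argument is, however, correct and is essentially the standard one behind that citation. You rightly observe that $I_c\subseteq I$ is definitional and that the whole content of $I\subseteq I_c$ is converting a diffeomorphism into a concordance via the injectivity half of Sullivan's bijection between $\mathcal{C}(\mathbb{C}\mathbb{P}^{n})$ and $c$-oriented diffeomorphism classes; the only obstruction is the sign of $\phi^{*}$ on $H^{2}$, and your parity computation (an orientation-preserving $\phi$ with $\phi^{*}(c')=-c$ forces $n$ even) dovetails exactly with the fact that complex conjugation $\tau$ satisfies $\tau^{*}c=-c$ and is orientation-preserving precisely when $n$ is even, so composing with $\tau$ repairs the $c$-orientation in the one case where it fails. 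The only caveat is at the bottom of the stated range $n\geq 1$: Sullivan's classification and the identification $\mathcal{C}(M)=[M,Top/O]$ require dimension at least $5$, and dismissing $n=2$ as vacuous because $\Theta_{4}=0$ tacitly uses the h-cobordism definition of $\Theta_{4}$ rather than the diffeomorphism definition adopted in this paper; since the present paper only invokes the theorem for $n=3$ and $4$, this does not affect anything downstream.
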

\begin{remark}\label{hoinva}\rm
\indent
\begin{itemize}
\item [(1)] By Theorem \ref{con.oct} and Theorem \ref{first}, $I_c(\mathbb{C}\textbf{P}^n)=0=I(\mathbb{C}\textbf{P}^n)$, where $n=3$ and $4$.
\item [(2)] By Kirby and Siebenmann identifications \cite[pp. 194-196]{KS77}, the group $\mathcal{C}(M)$ is a homotopy invariant.
\end{itemize}
\end{remark}
\begin{theorem}\label{class}
Let $M^{2n}$ be a closed smooth $2n$-manifold homotopy equivalent to $\mathbb{C}\textbf{P}^n$.
\begin{itemize}
\item[(i)] For $n=3$, $M^{2n}$ has a unique differentiable structure up to diffeomorphism.
\item[(ii)] For $n=4$, $M^{2n}$ has at most two distinct differentiable structures up to diffeomorphism.
\end{itemize}
Moreover, if $N$ is a closed smooth manifold homeomorphic to $M^{2n}$, where $n=3$ or $4$, there is a homotopy sphere $\Sigma\in \Theta_{2n}$ such that $N$ is diffeomorphic to $M\#\Sigma$.
\end{theorem}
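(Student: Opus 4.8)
The plan is to transport the computation of $\mathcal{C}(\mathbb{C}\textbf{P}^n)$ from Theorem \ref{con.oct} to $M$ using the homotopy invariance of $\mathcal{C}$, and then to pass from concordance classes to diffeomorphism classes via the fact that concordant smoothings have diffeomorphic total spaces. Fix a homotopy equivalence $h\colon M^{2n}\to \mathbb{C}\textbf{P}^n$. By Remark \ref{hoinva}(2), $\mathcal{C}$ is a homotopy invariant, so $h$ induces an isomorphism $h^{*}\colon \mathcal{C}(\mathbb{C}\textbf{P}^n)\xrightarrow{\cong}\mathcal{C}(M)$; combined with Theorem \ref{con.oct} this yields $\mathcal{C}(M^{6})=0$ and $\mathcal{C}(M^{8})\cong\mathbb{Z}_2$. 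There is a canonical surjection from $\mathcal{C}(M)$ onto the set of diffeomorphism classes of smooth manifolds homeomorphic to $M$, sending the class of a pair $(N,f)$ to the diffeomorphism class of $N$: it is well defined because concordant pairs have diffeomorphic total spaces, and surjective because any smooth $N$ homeomorphic to $M$ admits some homeomorphism $f\colon N\to M$. Hence the number of diffeomorphism classes is at most $|\mathcal{C}(M)|$, which is $1$ when $n=3$ and $2$ when $n=4$, proving (i) and (ii).

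For the \emph{moreover} statement I would realise every smoothing as a connected sum. Consider the map $c_M\colon \Theta_{2n}\to\mathcal{C}(M)$, $[\Sigma]\mapsto[M\#\Sigma]$, whose kernel (preimage of $[M]$) is exactly $I_c(M)$ by Definition \ref{homo.iner}. Under the Kirby--Siebenmann identification $\mathcal{C}(-)=[-,Top/O]$ this map is induced by the degree-one collapse $M\to\mathbb{S}^{2n}$ onto the top cell, and the analogous map $c_{\mathbb{C}\textbf{P}^n}\colon\Theta_{2n}\to\mathcal{C}(\mathbb{C}\textbf{P}^n)$ is precisely the homomorphism $f^{*}_{\mathbb{C}\textbf{P}^n}$ from the proof of Theorem \ref{con.oct}, which is onto (an isomorphism for $n=4$, and trivially onto for $n=3$ since $\mathcal{C}(\mathbb{C}\textbf{P}^3)=0$). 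Because the collapse maps of $M$ and of $\mathbb{C}\textbf{P}^n$ are compatible with $h$ up to homotopy, one gets $c_M=h^{*}\circ c_{\mathbb{C}\textbf{P}^n}$, and since $h^{*}$ is an isomorphism, $c_M$ is onto. Now given $N$ homeomorphic to $M$, choose a homeomorphism $f$; then $[(N,f)]\in\mathcal{C}(M)$ equals $c_M([\Sigma])=[M\#\Sigma]$ for some $\Sigma\in\Theta_{2n}$, so $(N,f)$ is concordant to $M\#\Sigma$ and therefore $N$ is diffeomorphic to $M\#\Sigma$. For $n=3$ we have $\Theta_6=0$, so this recovers $N\cong M$; the same naturality also gives $I_c(M)\cong I_c(\mathbb{C}\textbf{P}^n)=0$ by Remark \ref{hoinva}(1), so for $n=4$ the two concordance classes are genuinely distinct, consistent with the count in (ii).

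The step I expect to require the most care is the identification of the connected-sum operation with the collapse-induced map together with its naturality, that is, the equality $c_M=h^{*}\circ c_{\mathbb{C}\textbf{P}^n}$. This rests on the observation that forming $M\#\Sigma$ alters the smooth structure only inside an embedded disc, so the resulting element of $[M,Top/O]$ is pulled back from $[\mathbb{S}^{2n},Top/O]$ along the collapse map; one must then verify that $h$, being a $c$-oriented degree-one homotopy equivalence, commutes up to homotopy with the two collapse maps onto the common top cell $\mathbb{S}^{2n}$, so that the square relating $c_M$, $c_{\mathbb{C}\textbf{P}^n}$ and $h^{*}$ commutes. The remaining ingredients --- the values $\Theta_6=0$ and $\Theta_8\cong\mathbb{Z}_2$ of Kervaire--Milnor, the surjectivity of $f^{*}_{\mathbb{C}\textbf{P}^n}$ from Theorem \ref{con.oct}, and the implication that concordant smoothings are diffeomorphic --- are either already established in the excerpt or standard.
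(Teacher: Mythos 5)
Your proposal is correct and follows essentially the same route as the paper: transport $\mathcal{C}(\mathbb{C}\textbf{P}^n)$ to $\mathcal{C}(M)$ by the homotopy invariance of Remark \ref{hoinva}(2), invoke the computation of Theorem \ref{con.oct}, and use that concordant smoothings have diffeomorphic total spaces. The only difference is one of detail: you explicitly justify, via the naturality of the collapse maps ($c_M=h^{*}\circ c_{\mathbb{C}\textbf{P}^n}$ under the identification $\mathcal{C}(-)=[-,Top/O]$), that every class in $\mathcal{C}(M)$ is of the form $[M\#\Sigma]$, a step the paper leaves implicit when it asserts that $(N,f)$ is concordant to some $(M\#\Sigma,\mathrm{Id})$.
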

\begin{proof}
Let $N$ be a closed smooth manifold homeomorphic to $M$ and let $f:N\to M$ be a homeomorphism. Then $(N,f)$ represents an element in $\mathcal{C}(M)$. By Theorem \ref{con.oct} and Remark \ref{hoinva}(2), there is a homotopy sphere $\Sigma\in \Theta_{2n}$ such that $N$ is concordant to $(M\#\Sigma, Id)$. This implies that $N$ is diffeomorphic to $M\#\Sigma$. This proves the theorem.
\end{proof}
\begin{remark}\rm
Since $\Theta_8\cong \mathbb{Z}_2$ and $I(\mathbb{C}\textbf{P}^4)=0$, by Theorem \ref{class}, $\mathbb{C}\textbf{P}^4$ has exactly two distinct differentiable structures up to diffeomorphism.
\end{remark}
\section{Tangential types of Complex Projective Spaces}
\begin{definition}\rm
Let $M^n$ and $N^n$ be closed oriented smooth $n$-manifolds. We call $M$ a tangential type of $N$ if there is a smooth map $f:M\to N$ such $f^*(TN)=TM$, where $TM$ is the tangent bundle of $M$.
\end{definition}
\begin{example}\rm
\indent
\begin{itemize}
\item[(i)] Every finite sheeted cover of $\mathbb{C}\textbf{P}^n$ is a tangential type of $\mathbb{C}\textbf{P}^n$.
\item[(ii)] Since Borel \cite{Bor63} has constructed closed complex hyperbolic manifolds in every complex dimension $m\geq 1$, by \cite[Theorem 5.1]{Oku01}, there exists a closed  complex hyperbolic manifold $M^{2n}$ which is a tangential type of $\mathbb{C}\textbf{P}^n$.
\end{itemize}
\end{example}
\begin{lemma}\label{okun}{\rm \cite[Lemma 2.5]{Oku02}}
Let $M^{2n}$ be a tangential type of $\mathbb{C}\textbf{P}^n$ and assume $n\geq4$. Let $\Sigma_{1}$ and  $\Sigma_{2}$ be homotopy $2n$-spheres. Suppose that $M^{2n}\#\Sigma_{1}$ is concordant to $M^{2n}\#\Sigma_{2}$, then $\mathbb{C}\textbf{P}^n\#\Sigma_{1}$ is concordant to $\mathbb{C}\textbf{P}^n\#\Sigma_{2}$.
\end{lemma}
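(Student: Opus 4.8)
The plan is to pass to the homotopy-theoretic classification of concordance classes of smoothings and then to transport information from $M$ to $\mathbb{C}\textbf{P}^n$ by a wrong-way (umkehr) homomorphism that exists precisely because $f$ is tangential. Since $2n\geq 8\geq 5$, the Kirby--Siebenmann identification already used in Theorem \ref{con.oct} gives $\mathcal{C}(X)\cong [X,Top/O]$ for $X=M$ and $X=\mathbb{C}\textbf{P}^n$, and under this identification the class $[X\#\Sigma]$ corresponds to $c_X^{*}(\alpha_\Sigma)$, where $c_X:X\to \mathbb{S}^{2n}$ is the degree-one collapse map and $\alpha_\Sigma\in \pi_{2n}(Top/O)=\Theta_{2n}$ is the class of $\Sigma$. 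As $Top/O$ is an infinite loop space, $[-,Top/O]$ is group-valued, $c_X^{*}$ is a homomorphism, and $\alpha$ is additive under connected sum. Writing $\delta=\alpha_{\Sigma_1}-\alpha_{\Sigma_2}\in\Theta_{2n}$, the hypothesis that $M\#\Sigma_1$ is concordant to $M\#\Sigma_2$ becomes $c_M^{*}(\delta)=0$, and the desired conclusion becomes $c_{\mathbb{C}\textbf{P}^n}^{*}(\delta)=0$; that is, I must show $I_c(M)\subseteq I_c(\mathbb{C}\textbf{P}^n)$ at the level of these top-cell classes.

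The essential input is an umkehr homomorphism $f_{!}:[M,Top/O]\to[\mathbb{C}\textbf{P}^n,Top/O]$ for the generalized cohomology theory $h^{*}(-)$ represented by the spectrum underlying $Top/O$. Such a map is not available for an arbitrary map of $2n$-manifolds, but it is here: the tangentiality hypothesis $f^{*}T\mathbb{C}\textbf{P}^n\cong TM$ says exactly that the stable normal bundle of $f$, namely the virtual bundle $f^{*}T\mathbb{C}\textbf{P}^n-TM$, is trivial. Hence, choosing an embedding $M\hookrightarrow \mathbb{C}\textbf{P}^n\times\mathbb{R}^N$ covering $f$, the normal bundle of $M$ is stably trivial, and the Pontryagin--Thom collapse produces a stable map $\Sigma^{\infty}\mathbb{C}\textbf{P}^n_{+}\to\Sigma^{\infty}M_{+}$. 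Applying $h^{*}$ yields the homomorphism $f_{!}$, defined without any orientability obstruction because $h$ orients trivial bundles canonically. I stress that tangentiality is used in full strength here: a mere degree-$d$ map would give back only the pullback $f^{*}$, and one computes $f^{*}\circ c_{\mathbb{C}\textbf{P}^n}^{*}=d\cdot c_M^{*}$, which is useless for a $d$-sheeted cover.

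It then remains to establish the compatibility
\begin{equation}\label{umkehrcompat}
f_{!}\circ c_M^{*}=c_{\mathbb{C}\textbf{P}^n}^{*}:\ \pi_{2n}(Top/O)\longrightarrow [\mathbb{C}\textbf{P}^n,Top/O],
\end{equation}
after which $c_{\mathbb{C}\textbf{P}^n}^{*}(\delta)=f_{!}(c_M^{*}(\delta))=f_{!}(0)=0$ completes the proof. I would verify \eqref{umkehrcompat} by Poincar\'e duality in the theory $h$: since $c_X$ collapses the complement of a disk, the class $c_X^{*}(\beta)$ is Poincar\'e dual to the image $(\iota_X)_{*}(\beta')$ of $\beta$ under the inclusion $\iota_X$ of a point (equivalently, of the top disk) into $X$. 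Because the umkehr map satisfies $PD_{\mathbb{C}\textbf{P}^n}\circ f_{!}=f_{*}\circ PD_{M}$ on $h$-homology and $f\circ\iota_M\simeq \iota_{\mathbb{C}\textbf{P}^n}$ (all point inclusions into the connected manifold $\mathbb{C}\textbf{P}^n$ are homotopic), one gets $f_{*}(\iota_M)_{*}(\beta')=(\iota_{\mathbb{C}\textbf{P}^n})_{*}(\beta')$, which is dual to $c_{\mathbb{C}\textbf{P}^n}^{*}(\beta)$, giving \eqref{umkehrcompat}. The main obstacle is exactly this last step: one must know that $Top/O$ is an infinite loop space so that $h^{*}$ admits umkehr maps for tangential $f$, and one must check that $f_{!}$ carries the fundamental-class information of $M$ to that of $\mathbb{C}\textbf{P}^n$ without the multiplicative degree factor that $f^{*}$ introduces. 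Tangentiality is what removes that factor, since it trivializes the normal data of $f$ and makes the collapse onto a single top disk compatible on both sides.
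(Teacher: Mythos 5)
The paper does not actually prove this lemma; it quotes it verbatim from Okun, so there is no internal proof to compare against. Your strategy is, in substance, the right one and is the mechanism behind the cited result: reduce to showing that $c_{M}^{*}(\delta)=0$ implies $c_{\mathbb{C}\textbf{P}^n}^{*}(\delta)=0$ in $[\,\cdot\,,Top/O]$, and manufacture a wrong-way map $f_{!}$ from the tangential structure via a Pontryagin--Thom collapse onto a trivialized tubular neighbourhood of $M$ in $\mathbb{C}\textbf{P}^n\times\mathbb{R}^{N}$. The setup (the Kirby--Siebenmann identification, the correspondence $[X\#\Sigma]\leftrightarrow c_{X}^{*}(\alpha_{\Sigma})$, additivity of everything in sight) and the construction of $f_{!}$ are correct; in particular you are right that tangentiality, not merely nonzero degree, is what makes the normal bundle of the embedding stably (hence, in high corank, actually) trivial.

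The one step I would not accept as written is the verification of the compatibility $f_{!}\circ c_{M}^{*}=c_{\mathbb{C}\textbf{P}^n}^{*}$ ``by Poincar\'e duality in the theory $h$.'' Poincar\'e duality in a generalized cohomology theory requires an $h$-orientation of the manifold, i.e.\ a Thom class for its tangent bundle in $h$-theory, and there is no reason for $\mathbb{C}\textbf{P}^n$ (which is not stably parallelizable) to be orientable with respect to the theory represented by $Top/O$; so the identity $PD_{\mathbb{C}\textbf{P}^n}\circ f_{!}=f_{*}\circ PD_{M}$ is not available in the form you invoke it. Fortunately the compatibility you need is true and has a direct proof that bypasses duality: after $N$-fold suspension, both $c_{\mathbb{C}\textbf{P}^n}$ and $c_{M}\circ T_{f}$ (where $T_{f}$ denotes your collapse map) are Pontryagin--Thom collapses of $\mathbb{C}\textbf{P}^n\times\mathbb{R}^{N}$ onto embedded open $(2n+N)$-disks --- for the first, a chart of $\mathbb{C}\textbf{P}^n$ times $\mathbb{R}^{N}$; for the second, the restriction of the trivialized tubular neighbourhood of $M$ to a chart of $M$. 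Choosing the trivialization of the normal bundle compatibly with the orientations, the two disks are orientation-compatibly embedded in a connected oriented manifold, hence ambiently isotopic, so the two collapse maps are homotopic and the identity follows. (Alternatively, Atiyah duality $D(X_{+})\simeq X^{-TX}$, which needs no orientability hypothesis, identifies $c_{X}$ with the bottom-cell inclusion $S^{-2n}\to X^{-TX}$ and $T_{f}$ with the map of Thom spectra induced by $f$ together with the tangential identification $f^{*}T\mathbb{C}\textbf{P}^n\cong TM$, and gives the same conclusion.) With that repair the argument is complete.
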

\begin{theorem}\label{kawaku}{\rm \cite{Kaw68}}
For $n\leq 8$, $I(\mathbb{C}\textbf{P}^n)=0$.
\end{theorem}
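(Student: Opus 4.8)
The plan is to recast the statement as an injectivity question about the map $f^{*}_{\mathbb{C}\mathbb{P}^{n}}$ already used in the proof of Theorem \ref{con.oct}, and then resolve that question dimension by dimension. Under the Kirby--Siebenmann identifications $\mathcal{C}(\mathbb{C}\mathbb{P}^{n})=[\mathbb{C}\mathbb{P}^{n}, Top/O]$ and $\Theta_{2n}=[\mathbb{S}^{2n}, Top/O]$, the homomorphism $f^{*}_{\mathbb{C}\mathbb{P}^{n}}\colon \Theta_{2n}\to[\mathbb{C}\mathbb{P}^{n}, Top/O]$ carries a homotopy sphere $\Sigma$ to the class $[\mathbb{C}\mathbb{P}^{n}\#\Sigma]$. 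Thus $\Sigma$ lies in $I_{c}(\mathbb{C}\mathbb{P}^{n})$ exactly when $\mathbb{C}\mathbb{P}^{n}\#\Sigma$ is concordant to $\mathbb{C}\mathbb{P}^{n}$, i.e. when $f^{*}_{\mathbb{C}\mathbb{P}^{n}}(\Sigma)=0$, so that $I_{c}(\mathbb{C}\mathbb{P}^{n})=\ker f^{*}_{\mathbb{C}\mathbb{P}^{n}}$. By Theorem \ref{first}, $I(\mathbb{C}\mathbb{P}^{n})=I_{c}(\mathbb{C}\mathbb{P}^{n})$, so it suffices to prove that $f^{*}_{\mathbb{C}\mathbb{P}^{n}}$ is injective for $n\le 8$. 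Feeding this into the Puppe sequence (\ref{longG}), exactness gives $\ker f^{*}_{\mathbb{C}\mathbb{P}^{n}}=\mathrm{im}\,(S(g))^{*}$, where $(S(g))^{*}\colon[S\mathbb{C}\mathbb{P}^{n-1}, Top/O]\to[\mathbb{S}^{2n}, Top/O]=\Theta_{2n}$ is induced by the suspension of the attaching map $g\colon\mathbb{S}^{2n-1}\to\mathbb{C}\mathbb{P}^{n-1}$ of the top cell. The entire theorem is therefore equivalent to the assertion that $(S(g))^{*}=0$ for $n\le 8$.

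For the dimensions in which $\Theta_{2n}$ already vanishes, namely $n=1,2,3,6$ (so $2n=2,4,6,12$), there is nothing to prove, and the content is confined to $n=4,5,7,8$, i.e. $2n=8,10,14,16$. In each of these cases I would compute $(S(g))^{*}$ through the skeletal filtration of $S\mathbb{C}\mathbb{P}^{n-1}$, whose reduced cells lie in degrees $3,5,\dots,2n-1$, using the Atiyah--Hirzebruch spectral sequence for the generalized cohomology theory $[-,Top/O]$ together with $\pi_{i}(Top/O)=\Theta_{i}$. The structural point is that a class in $[S\mathbb{C}\mathbb{P}^{n-1}, Top/O]$ is assembled from its restrictions to the individual cells, while $(S(g))^{*}$ only detects the stable attaching data of the top cell; one then checks that the resulting composites land trivially in $\Theta_{2n}$.

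I expect the genuine obstacle to be exactly this last verification in the cases $2n=10$, where $\Theta_{10}=\mathbb{Z}_{6}$, and $2n=16$, where $\Theta_{16}=\mathbb{Z}_{2}$. Here one must control the contributions routed through the low-degree stable attaching maps of $\mathbb{C}\mathbb{P}^{n-1}$ (the $\eta$- and $\nu$-multiplications between its cells) and confirm that none of them hits a nonzero element of $\Theta_{2n}$; equivalently, one must show that the relevant differentials and extension problems in the spectral sequence vanish. This is where the real work lies, and it should be a repackaging, in the $Top/O$/concordance language of this paper, of Kawakubo's original characteristic-number computations; the cases $2n=8$ and $2n=14$ are lighter, the former also being recoverable directly from Theorem \ref{con.oct} and Remark \ref{hoinva}(1).
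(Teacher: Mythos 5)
First, note that the paper offers no proof of this statement at all: it is imported verbatim from Kawakubo \cite{Kaw68}, so there is no internal argument to compare yours against. Judged on its own terms, your reduction is sound and fits the paper's framework: under the identification $\mathcal{C}(\mathbb{C}\textbf{P}^n)=[\mathbb{C}\textbf{P}^n,Top/O]$ the map $f^{*}_{\mathbb{C}\mathbb{P}^{n}}$ does send $\Sigma$ to $[\mathbb{C}\textbf{P}^n\#\Sigma]$, so $I_c(\mathbb{C}\textbf{P}^n)=\ker f^{*}_{\mathbb{C}\mathbb{P}^{n}}=\mathrm{im}\,(S(g))^{*}$ by exactness of (\ref{longG}), and Theorem \ref{first} upgrades $I_c=0$ to $I=0$. (One should pause to confirm that \cite[Theorem 4.2]{Kas14} does not itself rest on Kawakubo's theorem, or the reduction is circular; it does not appear to, but you should say so.) For $n=4$ this route closes, since Theorem \ref{con.oct}(ii) already gives injectivity of $f^{*}_{\mathbb{C}\textbf{P}^{4}}$ via \cite[Lemma 3.17]{FJ94}.

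The genuine gap is that the proposal stops exactly where the content begins. For $2n=10,14,16$ you must show that every composite $\mathbb{S}^{2n}\to S\mathbb{C}\mathbb{P}^{n-1}\to Top/O$ is nullhomotopic, which amounts to computing the action of the stable attaching maps of $\mathbb{C}\mathbb{P}^{n-1}$ (the $\eta$-, $\eta^2$-, $\nu$-multiplications, etc.) on $\pi_{*}(Top/O)\cong\Theta_{*}$ and checking that nothing lands nontrivially in $\Theta_{10}\cong\mathbb{Z}_6$, $\Theta_{14}\cong\mathbb{Z}_2$, $\Theta_{16}\cong\mathbb{Z}_2$. This is a real computation in the $\pi_{*}^{s}$-module structure of $\Theta_{*}$ (together with the nontrivial differentials and extensions you mention), and you explicitly defer it with ``this is where the real work lies.'' Until that is carried out, the argument establishes only the formal equivalence $I(\mathbb{C}\textbf{P}^n)=\mathrm{im}\,(S(g))^{*}$, not the theorem. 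Two smaller points: your dismissal of $n=2$ via ``$\Theta_4=0$'' is not available under the paper's definition of $\Theta_4$ (orientation-preserving diffeomorphism classes of homotopy $4$-spheres), for which triviality is the open smooth $4$-dimensional Poincar\'e conjecture, and the identification $\mathcal{C}(M)=[M,Top/O]$ used throughout requires $\dim M\geq 5$; these low cases are not the ones the paper invokes, but they should not be waved away as vacuous.
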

\begin{theorem}\label{tang}
Let $M^{2n}$ be a tangential type of $\mathbb{C}\textbf{P}^n$. Then 
\begin{itemize}
\item[(i)] For $n\leq 8$, the concordance inertia group $I_c(M^{2n})=0$.
\item[(ii)] For $n=4k+1$, where $k\geq 1$, $$I_c(M^{2n})\neq \Theta_{2n}.$$ Moreover, if $M^{2n}$ is simply connected, then $$I(M^{2n})\neq \Theta_{2n}.$$
\end{itemize}
\end{theorem}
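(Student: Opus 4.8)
The plan is to prove both parts by pushing every statement about a tangential type $M^{2n}$ down to the corresponding statement about $\mathbb{C}\mathbb{P}^n$ itself, where the inertia group is either known (Theorem \ref{kawaku}) or analyzable through the Puppe sequence (\ref{longG}). For part (i), the cases $n\le 3$ are immediate, since then $2n\le 6$ and the relevant Kervaire--Milnor groups $\Theta_2,\Theta_4,\Theta_6$ all vanish, so $I_c(M^{2n})\subseteq\Theta_{2n}=0$. For $4\le n\le 8$, I would take $\Sigma\in I_c(M^{2n})$, so that $M^{2n}\#\Sigma$ is concordant to $M^{2n}=M^{2n}\#\mathbb{S}^{2n}$; since $n\ge 4$, Lemma \ref{okun} (applied with $\Sigma_1=\Sigma$ and $\Sigma_2=\mathbb{S}^{2n}$) gives that $\mathbb{C}\mathbb{P}^n\#\Sigma$ is concordant to $\mathbb{C}\mathbb{P}^n$, i.e. $\Sigma\in I_c(\mathbb{C}\mathbb{P}^n)$. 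By Theorem \ref{first} this equals $I(\mathbb{C}\mathbb{P}^n)$, which vanishes for $n\le 8$ by Theorem \ref{kawaku}; hence $\Sigma$ is standard and $I_c(M^{2n})=0$. Run without the dimension restriction, this same chain proves the containment $I_c(M^{2n})\subseteq I(\mathbb{C}\mathbb{P}^n)$ for every $n\ge 4$, which is the engine for part (ii).

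For part (ii) with $n=4k+1\ge 5$, the containment $I_c(M^{2n})\subseteq I(\mathbb{C}\mathbb{P}^n)$ just established reduces the first assertion to showing $I(\mathbb{C}\mathbb{P}^n)\neq\Theta_{2n}$. Using the Kirby--Siebenmann identification $\mathcal{C}(\mathbb{C}\mathbb{P}^n)\cong[\mathbb{C}\mathbb{P}^n,Top/O]$ together with Theorem \ref{first}, I would identify $I(\mathbb{C}\mathbb{P}^n)=I_c(\mathbb{C}\mathbb{P}^n)$ with the kernel of the map $f^*_{\mathbb{C}\mathbb{P}^n}\colon\Theta_{2n}\cong[\mathbb{S}^{2n},Top/O]\to[\mathbb{C}\mathbb{P}^n,Top/O]$ of the exact sequence (\ref{longG}); concretely $f^*_{\mathbb{C}\mathbb{P}^n}(\Sigma)=q^*(\Sigma)$ is the pullback along the collapse map $q\colon\mathbb{C}\mathbb{P}^n\to\mathbb{C}\mathbb{P}^n/\mathbb{C}\mathbb{P}^{n-1}=\mathbb{S}^{2n}$. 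Thus $I(\mathbb{C}\mathbb{P}^n)=\Theta_{2n}$ would force $f^*_{\mathbb{C}\mathbb{P}^n}=0$, which by exactness of (\ref{longG}) is equivalent to surjectivity of $(S(g))^*$. The goal is therefore to exhibit a single $\Sigma\in\Theta_{8k+2}$ with $q^*(\Sigma)\neq 0$.

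The hypothesis $n\equiv 1\pmod 4$, i.e. $2n\equiv 2\pmod 8$, is what makes this possible and is exactly where I expect the main obstacle to lie. In these dimensions the top cell of $\mathbb{C}\mathbb{P}^n$ is attached by an $\eta$-type map, and $\pi_{8k+2}^s$ carries a nontrivial image of $J$ detected by the Adams $e$-invariant; I would use this to produce a smoothing class that does not desuspend through $(S(g))^*$, thereby certifying $q^*(\Sigma)\neq 0$. Concretely I would either detect the class by its $e$-invariant / $KO$-characteristic number, or invoke Sullivan's characteristic-variety description of $[\mathbb{C}\mathbb{P}^n,Top/O]$; establishing the non-surjectivity of $(S(g))^*$ is the genuinely hard computational heart of the argument, the rest being formal.

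Finally, for the \emph{moreover} clause I would upgrade Lemma \ref{okun} from concordance to diffeomorphism under the extra hypothesis that $M^{2n}$ is simply connected. The simply-connected, tangential analogue yields $\Sigma\in I(M^{2n})\Rightarrow\Sigma\in I(\mathbb{C}\mathbb{P}^n)$, so that $I(M^{2n})\subseteq I(\mathbb{C}\mathbb{P}^n)\neq\Theta_{2n}$, giving the claim. Simple connectivity is precisely what permits one to transport an honest diffeomorphism $M\#\Sigma_1\cong M\#\Sigma_2$ (rather than a mere concordance) down to $\mathbb{C}\mathbb{P}^n$, via the $h$-cobordism and modified-surgery machinery that underlies Okun's lemma; without it only the concordance statement $I_c(M^{2n})\neq\Theta_{2n}$ survives.
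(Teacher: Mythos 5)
Your part (i) is essentially the paper's argument (push $\Sigma\in I_c(M^{2n})$ down to $I_c(\mathbb{C}\mathbb{P}^n)$ via Lemma \ref{okun} and kill it with Theorem \ref{kawaku}; your explicit handling of $n\le 3$ via $\Theta_{2n}=0$ is a small improvement, since Lemma \ref{okun} needs $n\ge 4$). Part (ii), however, has two genuine gaps. First, the existence of a homotopy sphere $\Sigma\in\Theta_{8k+2}$ with $\mathbb{C}\mathbb{P}^n\#\Sigma$ not concordant to $\mathbb{C}\mathbb{P}^n$ is exactly the step you defer (``the genuinely hard computational heart''), so your argument for $I_c(M^{2n})\neq\Theta_{2n}$ is a plan rather than a proof. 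The paper does not compute anything here: it quotes Kawakubo \cite[Proposition 9.2]{Kaw69}, which for $n=4k+1$ produces a $\Sigma$ that is not concordantly absorbed by $\mathbb{C}\mathbb{P}^n$ \emph{and} does not bound a spin manifold. That second property is not decoration --- it is the input for the ``moreover'' clause, and your outline never produces it.

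Second, and more seriously, your route to the ``moreover'' clause is the wrong mechanism. You propose to upgrade Lemma \ref{okun} from concordance to diffeomorphism under simple connectivity, i.e.\ to show $I(M^{2n})\subseteq I(\mathbb{C}\mathbb{P}^n)$. Okun's lemma works because concordance classes live in $[\,\cdot\,,Top/O]$ and pull back contravariantly along a tangential map of nonzero degree; an abstract diffeomorphism $M\#\Sigma\cong M$ carries no such functoriality, and there is no $h$-cobordism or modified-surgery argument that transports it down to $\mathbb{C}\mathbb{P}^n$ --- inertia groups are notoriously not homotopy-functorial, which is why $I$ and $I_c$ are distinguished in Definition \ref{homo.iner} in the first place. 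The paper instead argues intrinsically on $M^{2n}$: since $n$ is odd, $\mathbb{C}\mathbb{P}^n$ is spin, the tangential map forces $w_1(M)=w_2(M)=0$, so $M^{2n}$ is a simply connected spin manifold; Kawakubo's Lemma 9.1 then says that a homotopy sphere not bounding a spin manifold cannot lie in the inertia group of such a manifold, whence $\Sigma\notin I(M^{2n})$. This is where the spin condition on $\Sigma$ from Proposition 9.2 is consumed, and it is the step your proposal is missing.
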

\begin{proof}
(i): By Theorem \ref{kawaku}, for $n\leq 8$, $I(\mathbb{C}\textbf{P}^n)=0$ and hence $I_c(\mathbb{C}\textbf{P}^n)=0$. Now by Theorem \ref{okun}, $I_c(M^{2n})=0$. This proves (i).\\
(ii): By \cite[Proposition 9.2]{Kaw69}, for $n=4k+1$,  there exists a homotopy $2n$-sphere $\Sigma$ not bounding spin-manifold such that $\mathbb{C}\textbf{P}^n\#\Sigma$ is not concordant to $\mathbb{C}\textbf{P}^n$. Hence by Theorem \ref{okun}, $$I_c(M^{2n})\neq \Theta_{2n}.$$ Moreover, $\mathbb{C}\textbf{P}^n$ is a spin manifold and hence the Stiefel-Whitney class $w_i(\mathbb{C}\textbf{P}^n)=0$, where $i=1$ and $2$. Since $M^{2n}$ is a tangential type of $\mathbb{C}\textbf{P}^n$, there is a smooth map $f:M^{2n}\to \mathbb{C}\textbf{P}^n$ such that $f^*(T\mathbb{C}\textbf{P}^n)=TM^{2n}$. This implies that $w_i(M^{2n})=f^*(w_i(\mathbb{C}\textbf{P}^n))=0$. So, $M^{2n}$ is a spin manifold. If $M^{2n}$ is  simply connected, then by \cite[Lemma 9.1]{Kaw69}, $\Sigma\notin I(M^{2n})$ and hence $$I(M^{2n})\neq \Theta_{2n}.$$ This proves the theorem.
\end{proof}
\begin{remark}\rm
Let $M^{2n}$ be a tangential type of $\mathbb{C}\textbf{P}^n$.  By Theorem \ref{tang}, up to concordance, there exist at least $|\Theta_{2n}|$ distinct differentiable structures, namely $\{ [M^{2n}\#\Sigma]~~|~~ \Sigma\in \Theta_{2n}\}$, where $n=4, 5, 7$ or $8$ and $|\Theta_{2n}|$ is the order of $\Theta_{2n}$.
\end{remark}

\end{document}